\documentclass[oneside,english]{amsart}
\usepackage[T1]{fontenc}
\usepackage[latin9]{inputenc}
\usepackage{geometry}
\usepackage{amstext}
\usepackage{amsthm}
\usepackage{amssymb}
\usepackage{amsmath}
\usepackage{xcolor}
\usepackage{hyperref}

\makeatletter
\numberwithin{equation}{section}
\numberwithin{figure}{section}

\theoremstyle{plain}
\newtheorem{thm}{Theorem}

\newtheorem{cor}[thm]{Corollary}

\theoremstyle{definition}
\newtheorem{defn}[thm]{Definition}

\theoremstyle{remark}
\newtheorem{rem}[thm]{Remark}

\makeatother

\usepackage{babel}
\usepackage{tikz}
\usepackage{bbm}

\usepackage{algorithm}
\usepackage[noend]{algpseudocode}

    \algnewcommand\algorithmicto{\textbf{to}}
    \algnewcommand\To{\algorithmicto{} }
    \algnewcommand\algorithmicswitch{\textbf{switch}}
    \algnewcommand\algorithmiccase{\textbf{case}}
    \algdef{SE}[SWITCH]{Switch}{EndSwitch}[1]{\algorithmicswitch\ #1\ \algorithmicdo}{\algorithmicend\ \algorithmicswitch}%
    \algdef{SE}[CASE]{Case}{EndCase}[1]{\algorithmiccase\ #1}{\algorithmicend\ \algorithmiccase}%
    \algtext*{EndSwitch}%
    \algtext*{EndCase}%

\newcommand{\RR}{{\mathbb{R}}}

\newcommand{\blfootnote}[1]{{\renewcommand{\thefootnote}{\roman{footnote}}\footnotetext[0]{#1}}}

\DeclareMathOperator{\rank}{rank}
\DeclareMathOperator{\tr}{tr}

\begin{document}

\title{Graph Eigenvalues and Projection Constants }
\author{Tanay Wakhare${}^{1,2}$}

\begin{abstract}
Let \(\lambda_1(G)\ge \lambda_2(G)\ge \cdots \ge \lambda_n(G)\) denote the adjacency eigenvalues of a graph \(G\) of order \(n\). We prove that for every \(k\ge 2\) and every graph \(G\) on \(n\ge k\) vertices,
\[
\lambda_k(G)\le \frac{\lambda_{\RR}(k-1)}{2(k-1)}\,n-1,
\]
where
\[
\lambda_{\RR}(r)=\sup_{N\ge r}\frac1N \max_{Q\in \mathcal P_r(N)}\sum_{i,j=1}^N |q_{ij}|
\]
and \(\mathcal P_r(N)\) denotes the set of rank-\(r\) orthogonal projections in \(\RR^{N\times N}\). In Banach space theory, \(\lambda_{\RR}(r)\) is well known as the maximal absolute projection constant, which has been shown to equal the quasimaximal absolute projection constant \(\mu_{\RR}(r)\). 
 
This yields a new conceptual connection: universal upper bounds on \(\lambda_k(G)\) are controlled by the real maximal absolute projection constant \(\lambda_{\RR}(k-1)\). In dimensions where \(\lambda_{\RR}(k-1)\) is known explicitly, this gives explicit coefficients. In particular, for \(k=3\) this recovers Tang's recent sharp bound \(\lambda_3(G)\le n/3-1\). For \(k=4\), using \(\lambda_{\RR}(3)=\frac{1+\sqrt5}{2}\) together with Linz's closed blowups of the icosahedral graph, we obtain the result
\[
\lambda_4(G) \leq \frac{1+\sqrt5}{12}n-1.
\]
The method allows us to transfer known upper bounds on $\lambda_{\RR}(k-1)$ to match the best known upper bounds on $\lambda_k(G)$ for other values of $k$, such as $k=5$. 
\end{abstract}
\maketitle

\blfootnote{$^{1}$~Department of Electrical Engineering and Computer Science, Massachusetts Institute of Technology}
\blfootnote{$^{2}$~Prompt Inversion AI}
\blfootnote{ \quad   \email{twakhare@mit.edu}}

\blfootnote{ \quad MSC2020: 05C50, 05C35, 15A42, 46B20.}

\section{Introduction}

Throughout, all graphs are finite, simple, and undirected. For a graph \(G\) on \(n\) vertices, let
\[
\lambda_1(G)\ge \lambda_2(G)\ge \cdots \ge \lambda_n(G)
\]
be the eigenvalues of its adjacency matrix. For \(k\ge 1\), define
\[
M_k(n):=\max\{\lambda_k(G): |V(G)|=n\},
\qquad
c_k:=\sup_{n\ge k}\frac{M_k(n)}{n}.
\]
An old and fundamental question in spectral graph theory asks how large can the $k$-th eigenvalue $\lambda_k$ can be in terms of only the order $n$. This problem of finding the exact values of $c_k$ goes back at least to Powers \cite{powers1989bounds} and Hong \cite{hong1993bounds}, and was placed in a systematic
framework by Nikiforov \cite{Nikiforov2015}. The easiest construction is the
disjoint union of \(k\) equal cliques: if \(k\mid n\), then
\[
\lambda_k\!\left(kK_{n/k}\right)=\frac{n}{k}-1,
\]
so \(c_k\ge 1/k\). For \(k=1,2\) this bound is sharp, but from \(k=3\) onward the behavior is more subtle. Nikiforov proved the generic upper bound
\[
c_k\le \frac{1}{2\sqrt{k-1}},
\]
and also produced constructions showing that for $k\geq 5$, the bound $1/k$ is exceeded \cite{Nikiforov2015}. The standard Kadec-Snobar bound $\lambda_{\RR}(k-1)\leq \sqrt{k-1}$, with a short proof at \cite[Remark 10]{FoucartSkrzypek2017}, immediately recovers Nikiforov's bound when applied to our main theorem.

Recently, following work by Leonida and Li \cite{leonida2025graphs} and Li \cite{li2025strengthened}, Tang proved that every graph \(G\) on \(n\ge 3\) vertices
satisfies
\[
\lambda_3(G)\le \frac n3-1,
\]
and hence \(c_3=1/3\) \cite{Tang2026}. Tang's proof is derived from a more general matrix
inequality for the sum of the two smallest eigenvalues of a symmetric matrix with off-diagonal entries in \([0,1]\), and it is this matrix viewpoint that we generalize. Tang heavily relies on trigonometric functions, but this turns out to be specialized to $k=3$.

The starting observation is a standard one from matrix analysis. If
\[
\lambda_1\ge \lambda_2\ge \cdots \ge \lambda_n
\]
are the eigenvalues of a real symmetric matrix \(A\), then Ky Fan's minimum principle rewrites the
sum of the bottom \(r\) eigenvalues as
\[
\lambda_{n-r+1}+\cdots+\lambda_n
=
\min_{Q\in\mathcal P_r(n)} \tr(AQ),
\]
where \(\mathcal P_r(n)\) denotes the set of rank-\(r\) orthogonal projections
\cite[Corollary~4.3.39]{horn2012matrix}. Conceptually, instead of
trying to estimate a graph eigenvalue directly, we can study how a matrix \(A\) interacts with a
low-rank projection \(Q\). The key point is that we will assume control over the entries of $A$, so it is difficult to directly control the eigenvalues and eigenvectors. However, we can control individual entries of $AQ$, and hence the trace.

Concretely, if \(Q=(q_{ij})\) is an orthogonal projection, then the diagonal entries
\(q_{ii}\) are nonnegative and \(\tr Q=r\). Then, the only terms in \(\tr(AQ)\) that can create a
large negative contribution come from the negative off-diagonal mass of \(Q\). The underlying question then becomes:

\medskip

\centerline{\em How negative can a rank-\(r\) projection look entrywise?}

\medskip

A short bookkeeping argument shows that this is controlled by a single quantity,
\[
\beta_r:=\sup_{N\ge r}\frac1N
\max_{Q\in\mathcal P_r(N)} \sum_{i,j=1}^N |q_{ij}|.
\]
Our weighted master inequality, Theorem~\ref{thm:weighted}, shows that if
\(A=(a_{ij})\) is a real symmetric matrix with \(0\le a_{ij}\le 1\) for \(i\neq j\) and
\(a_{ii}\ge 0\), then
\[
\lambda_{n-r+1}+\cdots+\lambda_n=
\min_{Q\in\mathcal P_r(n)} \tr(AQ) \ge -\frac{\beta_r}{2}\,n.
\]
Passing from \(A(\overline G)\) back to \(A(G)\) through the identity
\[
A(G)+A(\overline G)=J-I,
\]
with $J$ the all ones matrix, and then applying Weyl's inequality, gives the graph bound
\[
\lambda_{r+1}(G)\le \frac{\beta_r}{2r}\,n-1.
\]
This is Corollary~\ref{cor:graph-r}.

If \(Q=UU^T\) with \(U\in \RR^{n\times r}\) and
\(U^TU=I_r\), and if \(u_1,\dots,u_n\in\RR^r\) are the rows of \(U\), then
\[
q_{ij}=\langle u_i,u_j\rangle.
\]
Therefore
\[
\sum_{i,j=1}^n |q_{ij}|=\sum_{i,j=1}^n |\langle u_i,u_j\rangle|.
\]
Then \(\beta_r\) is exactly the largest normalized total absolute inner-product energy of a tight
frame with frame operator \(I_r\), so the problem naturally connects to extremal questions
about tight frames and projection constants in Banach space theory.  For a finite-dimensional Banach space \(Y\subset X\), let \(\lambda_{\RR}(Y,X)\) denote the least norm
of a projection \(X\to Y\), and let \(\lambda_{\RR}(Y)\) denote the corresponding absolute projection
constant. Write
\[
\lambda_{\RR}(r,N):=\sup\{\lambda_{\RR}(Y,\ell_\infty^N): \dim Y=r\},
\qquad
\lambda_{\RR}(r):=\sup_{N\ge r}\lambda_{\RR}(r,N).
\] For \(N\ge r\), define
\[
\mu_{\RR}(r,N):=\frac1N\max\left\{\sum_{i,j=1}^N |(UU^T)_{ij}|:
U\in\RR^{N\times r},\ U^TU=I_r\right\},
\qquad
\mu_{\RR}(r):=\sup_{N\ge r}\mu_{\RR}(r,N).
\]
Then \(\beta_r=\mu_{\RR}(r)\) by definition. In the Banach-space literature, \(\mu_{\RR}(r,N)\) and \(\mu_{\RR}(r)\) are the quasimaximal relative
and quasimaximal absolute projection constants, while \(\lambda_{\RR}(r,N)\) and \(\lambda_{\RR}(r)\)
are the maximal relative and maximal absolute projection constants. In general, for fixed \(N\),
one only has
\[
\mu_{\RR}(r,N)\le \lambda_{\RR}(r,N)
\]
\cite{FoucartSkrzypek2017}. The crucial input for us is that, after taking the supremum over \(N\), the quasimaximal and maximal absolute constants coincide:
\[
\mu_{\RR}(r)=\lambda_{\RR}(r)
\]
\cite{DeregowskaLewandowska2024}. This gives the key identification
\[
\beta_r=\lambda_{\RR}(r).
\]
Moreover \cite[Theorem 2.1]{DeregowskaLewandowska2024}, we have
\begin{equation}\label{eq:dlbound}
    \lambda_{\RR}(r)\le \delta_{r,r(r+1)/2},
\qquad
\delta_{r,N}:=\frac{r}{N}\left(1+\sqrt{\frac{(N-1)(N-r)}{r}}\right),
\end{equation}
and this upper bound is currently known to be attained in the real maximal ETF (equiangular tight frame) dimensions
\(r=2,3,7,23\). See \cite{Kobos2025} for the current state of knowledge and for a careful
discussion of corrections to earlier claims in the projection-constant literature. While connections between projection constants and eigenvalues or Ky Fan norms have been noted before, to the best of our knowledge this is the first direct application using projection constants to bound graph eigenvalues.

Applying this correspondence to Corollary \ref{cor:graph-r} immediately yields the main result of the paper:
\[
\lambda_k(G)\le \frac{\lambda_{\RR}(k-1)}{2(k-1)}\,n-1
\qquad (k\ge 2).
\]
For \(k=3\), this recovers Tang's theorem because \(\lambda_{\RR}(2)=4/3\), the determination of which was known as Grunbaum's conjecture. It seems that Tang implicitly rederives the bound \(\lambda_{\RR}(2)\leq 4/3\) in his proof \cite{Tang2026} using trigonometric majorants, in a different way than known solutions of Grunbuam's conjecture. The application to Grunbaum's conjecture merits further independent investigation. For \(k=4\), it is known that $\lambda_{\RR}(3)=\frac{1+\sqrt5}{2}$ \cite{DeregowskaLewandowska2024}. Our main result gives
\[
\lambda_4(G)\le \frac{1+\sqrt5}{12}\,n-1.
\]
Linz \cite{linz2023improved} showed that closed blowups of the icosahedral graph achieve equality, yielding
\[
c_4= \frac{1+\sqrt5}{12}> \frac14.
\]
For \(k=5\), the same argument gives
\[
c_5\le \frac{\lambda_{\RR}(4)}{8}\leq \frac{2+3\sqrt6}{40}= 0.2337\ldots,
\]
while blowups of the 9-vertex Paley graph yield \(c_5\ge 2/9\) \cite{linz2023improved}. Thus the method solves the
$\lambda_4$ problem exactly and reduces the {upper bound} on the $\lambda_5$
problem to the open problem of computing the four dimensional projection constant \(\lambda_{\RR}(4)\). 

We can trace through the chain of inequalities in our master theorem to show that the Leonida-Li family of graphs $H_{a,b}$ \cite{leonida2025graphs} are the only graphs attaining equality in 
$$\lambda_3(G) = \frac{n}{3}-1,\quad 3|n.$$
We leave the full proof of this, and the complete characterization of graphs attaining equality in $$\lambda_4(G) = \frac{1+\sqrt5}{12}n-1,$$
for future work. 

Immediately before this work was submitted, the related result \cite[Theorem 1.1]{sivashankar2026} 
$$ \lambda_k(G) \leq \frac{(k-2)\sqrt{k+1}+2}{2k(k-1)}n-1$$
appeared, which matches our bounds for $k=4,5$. The proof method using the Ky Fan principle is similar, but the result does not note the connection to projection constants. Using our projection constant viewpoint and plugging in the known bound of Equation \eqref{eq:dlbound} into Corollary \ref{cor:graph-r} recovers this theorem, without using any Gegenbauer polynomial machinery.

This note begins by proving Theorem \ref{thm:weighted}, a new inequality for the sum of the bottom eigenvalues of a matrix, based on expanding the trace term from the Ky Fan minimum principle entrywise. The bound on $\lambda_k$ then follows from applying Theorem \ref{thm:weighted} to the adjacency matrix of the complement $\overline{G}$, and then applying Weyl's inequality.

\section{Variational Characterization}

\begin{defn}
For integers $n\ge r\ge 1$, let
\[
\mathcal P_r(n):=\{Q\in \RR^{n\times n}: Q^2=Q,\ Q^T=Q,\ \rank Q=r\}.
\]
Define
\[
\beta_r(n):=\frac1n\max_{Q\in\mathcal P_r(n)}\sum_{i,j=1}^n |q_{ij}|,
\qquad
\beta_r:=\sup_{n\ge r}\beta_r(n).
\]
\end{defn}

The next theorem is the general mechanism behind all of the graph bounds.

\begin{thm}[Weighted master inequality]\label{thm:weighted}
Fix $r\ge 1$.
Let $A=(a_{ij})$ be a real symmetric matrix of order $n\geq r$ with eigenvalues
\[
\lambda_1\ge \lambda_2\ge \cdots \ge \lambda_n,
\]
such that
\[
0\le a_{ij}\le 1\quad (i\neq j),
\qquad
a_{ii}\ge 0\quad (1\le i\le n).
\]
Then
\[
\lambda_{n-r+1}+\cdots+\lambda_n\ge -\frac{\beta_r}{2}\,n.
\]
\end{thm}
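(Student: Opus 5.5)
We plan to combine Ky Fan's minimum principle with an entrywise expansion of the trace. By Ky Fan, the sum of the bottom \(r\) eigenvalues equals
\[
\lambda_{n-r+1}+\cdots+\lambda_n=\min_{Q\in\mathcal P_r(n)}\tr(AQ).
\]
The minimum exists because \(\mathcal P_r(n)\) is compact. It therefore suffices to show that \(\tr(AQ)\ge -\frac{\beta_r}{2}n\) for every fixed \(Q=(q_{ij})\in\mathcal P_r(n)\).

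Since \(A\) and \(Q\) are symmetric,
\[
\tr(AQ)=\sum_{i}a_{ii}q_{ii}+\sum_{i\neq j}a_{ij}q_{ij}.
\]
Every diagonal entry of an orthogonal projection satisfies \(q_{ii}=\|Qe_i\|^2\ge 0\), and \(a_{ii}\ge 0\), so the diagonal sum is nonnegative. For the off-diagonal terms we use \(0\le a_{ij}\le 1\) to get \(a_{ij}q_{ij}\ge -q_{ij}^-\), where \(x^-=\max(-x,0)\). This yields
\[
\tr(AQ)\ge -\sum_{i\neq j}q_{ij}^-.
\]

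The main step is to bound the negative off-diagonal mass by half the total absolute mass. We will use the identity \(\mathbf 1^TQ\mathbf 1=\|Q\mathbf 1\|^2\ge 0\), that is, \(\sum_{i,j}q_{ij}\ge 0\). Writing \(P\) for the sum of the positive parts of all entries (diagonal included) and \(M\) for the sum of the negative parts, this says \(P\ge M\). Hence
\[
\sum_{i,j}|q_{ij}|=P+M\ge 2M,
\]
so \(M\le \frac12\sum_{i,j}|q_{ij}|\). Since the diagonal entries are nonnegative, they contribute nothing to \(M\), and \(\sum_{i\neq j}q_{ij}^-=M\). Combining these,
\[
\tr(AQ)\ge -\tfrac12\sum_{i,j}|q_{ij}|\ge -\tfrac12\, n\,\beta_r(n)\ge -\tfrac{\beta_r}{2}n.
\]
The last two inequalities follow from the definitions of \(\beta_r(n)\) and \(\beta_r\).

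The only potential obstacle is obtaining the factor \(\tfrac12\) rather than \(1\). A crude bound on the negative mass by the total mass would lose this factor, and the positive semidefiniteness of \(Q\) tested against \(\mathbf 1\) is what recovers it. Beyond that, we only need to confirm that \(\beta_r(n)\) is a genuine maximum, which again follows from compactness.
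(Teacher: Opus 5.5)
Your proposal is correct and follows essentially the same route as the paper: Ky Fan's minimum principle, the entrywise bound $a_{ij}q_{ij}\ge\min(q_{ij},0)$, and positive semidefiniteness tested against $\mathbf 1$ to gain the factor $\tfrac12$. The only difference is bookkeeping. You put the nonnegative diagonal into the $P\ge M$ comparison. The paper instead separates the diagonal and uses $\operatorname{tr} Q=r$, which cancels, so your version is slightly more streamlined but gives the same bound.
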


\begin{proof}
By Ky Fan's minimum principle,
\[
\lambda_{n-r+1}+\cdots+\lambda_n
=
\min_{Q\in\mathcal P_r(n)} \tr(AQ).
\]
So it suffices to prove
\[
\tr(AQ)\ge -\frac{\beta_r}{2}\,n
\]
for every $Q=(q_{ij})\in\mathcal P_r(n)$.

Fix such a $Q$.
Since $Q$ is a projection of rank $r$, we have
\[
\tr Q=r,
\qquad
q_{ii}\ge 0 \quad (1\le i\le n).
\]
Hence
\[
\sum_{1\le i<j\le n}|q_{ij}|
=
\frac12\left(\sum_{i,j=1}^n |q_{ij}|-\sum_{i=1}^n q_{ii}\right)
\le
\frac12(\beta_r n-r).
\]
Letting $\mathbf{1}$ denote the all-ones vector, we have
\[
\sum_{1\le i<j\le n} q_{ij}
=
\frac12(\mathbf{1}^T Q \mathbf{1}-\tr Q)
\ge -\frac r2,
\]
because $\mathbf{1}^T Q\mathbf{1}\ge 0$.

Therefore
\begin{align*}
\sum_{1\le i<j\le n}\min(q_{ij},0)
&=
\frac12\left(\sum_{i<j}q_{ij}-\sum_{i<j}|q_{ij}|\right)\\
&\ge
\frac12\left(-\frac r2-\frac{\beta_r n-r}{2}\right)
=
-\frac{\beta_r n}{4}.
\end{align*}

Now use the entrywise hypotheses on $A$.
For $i<j$,
\[
a_{ij}q_{ij}\ge \min(q_{ij},0),
\]
since if $q_{ij}\geq 0$ we have $\min(q_{ij},0)=0$, and if $q_{ij}<0$ then we can use $0\le a_{ij}\le 1$. Also $a_{ii}q_{ii}\ge 0$ because both terms are nonnegative.
Thus
\begin{align*}
\tr(AQ)
&=
\sum_{i=1}^n a_{ii}q_{ii}
+
2\sum_{1\le i<j\le n} a_{ij}q_{ij}\\
&\ge
2\sum_{1\le i<j\le n}\min(q_{ij},0)\\
&\ge
-\frac{\beta_r n}{2}.
\end{align*}
Taking the minimum over $Q\in\mathcal P_r(n)$ completes the proof.
\end{proof}

\begin{cor}\label{cor:graph-r}
For every $k\ge 2$,
\[
\lambda_k(G)\le \frac{\lambda_\RR(k-1)}{2(k-1)}\,n-1.
\]
Hence
\[
c_k\le \frac{\lambda_\RR(k-1)}{2(k-1)}.
\]
\end{cor}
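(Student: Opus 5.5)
Set $r=k-1\ge 1$ and apply Theorem~\ref{thm:weighted} to the adjacency matrix of the complement, $A=A(\overline G)$. Its off-diagonal entries lie in $\{0,1\}$ and its diagonal is zero, so the hypotheses hold. If $n\ge r$ we get
\[
\lambda_{n-r+1}(\overline G)+\cdots+\lambda_n(\overline G)\ge -\frac{\beta_r}{2}\,n .
\]
Each of these $r$ eigenvalues is at least as large as the smallest of them, $\lambda_{n-r+1}(\overline G)$, so their sum is at most $r\,\lambda_{n-r+1}(\overline G)$. Hence
\[
\lambda_{n-r+1}(\overline G)\ge -\frac{\beta_r}{2r}\,n .
\]

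Next we transfer this to $G$ using $A(G)=J-I-A(\overline G)$. The eigenvalues of $-A(\overline G)$ in decreasing order are $-\lambda_n(\overline G)\ge\cdots\ge-\lambda_1(\overline G)$. Its $r$-th largest eigenvalue is therefore $-\lambda_{n-r+1}(\overline G)$.

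Since $J$ is positive semidefinite of rank one, Weyl's inequality $\lambda_{i+j-1}(X+Y)\le\lambda_i(X)+\lambda_j(Y)$ applies with $X=-I-A(\overline G)$, $Y=J$, $i=r$ and $j=2$. Because $\lambda_2(J)=0$, this gives
\[
\lambda_{r+1}(G)\le \lambda_r\bigl(-I-A(\overline G)\bigr)=-1-\lambda_{n-r+1}(\overline G)\le \frac{\beta_r}{2r}\,n-1 .
\]
Index bookkeeping is the one step to double-check: $\lambda_r(-A(\overline G))=-\lambda_{n-r+1}(\overline G)$, and the Weyl indices must give exactly $r+1=k$.

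Finally we identify the constant. By definition $\beta_r=\mu_{\RR}(r)$, and by the result of \cite{DeregowskaLewandowska2024} quoted in the introduction, $\mu_{\RR}(r)=\lambda_{\RR}(r)$. Substituting $r=k-1$ yields
\[
\lambda_k(G)\le \frac{\lambda_\RR(k-1)}{2(k-1)}\,n-1 .
\]
The hypothesis $n\ge r$ needed for Theorem~\ref{thm:weighted} holds because $c_k$ is defined over $n\ge k>r$.

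Dividing by $n$, the bound becomes $M_k(n)/n\le \lambda_\RR(k-1)/(2(k-1))-1/n$, which is below $\lambda_\RR(k-1)/(2(k-1))$. Taking the supremum over $n\ge k$ gives the stated bound on $c_k$.

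The only real difficulty is the identification $\beta_r=\lambda_\RR(r)$. It depends entirely on the external theorem equating the quasimaximal and maximal absolute projection constants, so I would cite it rather than reprove it. Everything else is routine Weyl and Ky Fan bookkeeping.
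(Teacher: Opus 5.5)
Your proof is correct and follows the paper's route: Theorem~\ref{thm:weighted} applied to $A(\overline G)$, averaging over the bottom $r$ eigenvalues, then Weyl's inequality and the cited identification $\beta_r=\lambda_\RR(r)$. Your Weyl step, writing $A(G)=(-I-A(\overline G))+J$ and using $\lambda_2(J)=0$, is an equivalent form of the paper's $\lambda_{r+1}(G)+\lambda_{n-r+1}(\overline G)\le\lambda_2(J-I)=-1$. One wording slip: $\lambda_{n-r+1}(\overline G)$ is the \emph{largest}, not the smallest, of the bottom $r$ eigenvalues, and that is exactly why their sum is at most $r\,\lambda_{n-r+1}(\overline G)$.
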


\begin{proof}
Apply Theorem~\ref{thm:weighted} to the adjacency matrix of $\overline G$:
\[
\lambda_{n-r+1}(\overline G)+\cdots+\lambda_n(\overline G)\ge -\frac{\beta_r}{2}\,n.
\]
Therefore
\[
\lambda_{n-r+1}(\overline G)\ge
\frac1r \sum_{j=n-r+1}^n \lambda_j(\overline G)
\ge -\frac{\beta_r}{2r}\,n.
\]
Now
\[
A(G)+A(\overline G)=J-I,
\]
where $J$ is the all ones matrix and the second largest eigenvalue of $J-I$ equals $-1$.
We apply Weyl's inequality for Hermitian matrices in the form $\lambda_i(X)+\lambda_j(Y) \leq \lambda_{i+j-n}(X+Y)$ for $i+j\geq n+1$. Using this with indices $r+1$ and $n-r+1$ gives
\[
\lambda_{r+1}(G)+\lambda_{n-r+1}(\overline G)\le \lambda_2(J-I) \leq -1.
\]
Hence
\[
\lambda_{r+1}(G)\le -1-\lambda_{n-r+1}(\overline G)
\le \frac{\beta_r}{2r}\,n-1.
\]
Now set $r=k-1$. As discussed in the introduction, we then have $\beta_r = \lambda_\RR(r)$ with $\lambda_\RR(r)$ the maximal absolute projection constant, completing the proof.
\end{proof}
 Applying the known values \cite{DeregowskaLewandowska2024} $$\lambda_\RR(2) = \frac{4}{3}, \quad  \lambda_\RR(3) = \frac{1+\sqrt{5}}{2}, \quad \lambda_\RR(4) \leq \frac{2+3\sqrt{6}}{5}$$ give the bounds mentioned in the introduction.

\section{Acknowledgments}
We thank Davin Park for carefully proofreading this work and providing valuable feedback. GPT 5.4-Pro was used during the ideation phase. All claims were verified by the author, who takes full responsibility for the final mathematical accuracy of this work.

\bibliographystyle{amsalpha}
\bibliography{bibliography}

\end{document}